\newtheorem{thm}{Theorem}[section]
\newtheorem{cor}[thm]{Corollary}
\newtheorem{lem}[thm]{Lemma}
\newtheorem{prop}[thm]{Proposition}
\theoremstyle{definition}
\newtheorem{defn}{Definition}[section]
\theoremstyle{remark}
\newtheorem{rem}{Remark}[section]
\newtheorem{exm}{Example}[section]
\numberwithin{thm}{section}
\numberwithin{equation}{section}
\newcommand{\mc}{\mathbb{C}}
\newcommand{\mi}{\mathbb{I}}
\newcommand{\mn}{\mathbb{N}}
\newcommand{\mz}{\mathbb{Z}}
\newcommand{\mb}{\mathbb{B}}
\newcommand{\mba}{\mathbf{a}}
\newcommand{\mbk}{\mathbf{k}}
\newcommand{\cM}{\mathcal{M}}
\newcommand{\cO}{\mathcal{O}}
\begin{document}

\subjclass[2010]{32A07; 32H02}
\keywords{quasi-Reinhardt domain; biholomorphism; resonance order}

\title{On biholomorphisms between bounded quasi-Reinhardt domains}

\author{Fusheng Deng, Feng Rong}

\address{F. Deng: Matematisk Institutt, Universitetet i Oslo, Postboks 1053 Blindern, 0316 Oslo, Norway; and School of Mathematical Sciences, University of Chinese Academy of Sciences, Beijing 100049, China}
\email{fushengd@math.uio.no, fshdeng@ucas.ac.cn}

\address{F. Rong: Department of Mathematics, Shanghai Jiao Tong University, 800 Dong Chuan Road, Shanghai, 200240, P.R. China}
\email{frong@sjtu.edu.cn}

\thanks{The authors are partially supported by the National Natural Science Foundation of China (Grant Nos. 11371025 and 11371246).}

\begin{abstract}
In this paper we define what is called a quasi-Reinhardt domain and study biholomorphisms between such domains. We show that all biholomorphisms between two bounded quasi-Reinhardt domains fixing the origin are polynomial mappings and we give a uniform upper bound for the degree of such polynomial mappings. In particular, we generalize the classical Cartan's Linearity Theorem for circular domains to quasi-Reinhardt domains.
\end{abstract}

\maketitle

\section{Introduction}

Let $T^r=(S^1)^r$ be the torus group of dimension $r\ge 1$. Let $\rho:T^r\rightarrow GL(n,\mc)$ be a holomorphic linear action of $T^r$ on $\mc^n$ such that $\cO(\mc^n)^\rho=\mc$, where $\cO(\mc^n)^\rho$ is the algebra of $\rho$-invariant holomorphic functions on $\mc^n$.

\begin{defn}\label{D:quasi}
A domain $D\subset \mc^n$ is called a \emph{quasi-Reinhardt domain} of \emph{rank} $r$ with respect to $\rho$ if $D$ is $\rho$-invariant.
\end{defn}

Note that quasi-Reinhardt domains of rank one are quasi-circular domains, which include circular domains as a special case. And quasi-Reinhardt domains of rank $n$ are Reinhardt domains.

The classical Cartan's Linearity Theorem says that any biholomorphism between two bounded circular domains fixing the origin is linear (see e.g. \cite{Cartan, N:SCV}). In \cite{K:auto}, Kaup showed that any automorphism of a bounded quasi-circular domain fixing the origin is polynomial. This result was later generalized to more general settings by Heinzner in \cite{Heinzner92}. In \cite{Y:quasi, Y:quasi1}, Yamamori gave sufficient conditions for automorphisms of a bounded quasi-circular domain fixing the origin to be linear. In \cite{R:quasi}, the second named author introduced the so-called \textit{resonance order} and \textit{quasi-resonance order} for quasi-circular domains and gave a uniform upper bound for the degree of automorphisms of a bounded quasi-circular domain fixing the origin. In particular, a generalization of Cartan's Linearity Theorem was obtained for quasi-circular domains.

The aim of this paper is to generalize the results for automorphisms of bounded quasi-circular domains obtained in \cite{R:quasi} to biholomorphisms between bounded quasi-Reinhardt domains. Our main result is the following

\begin{thm}\label{T:main}
Let $\rho$ \textup{(resp.} $\rho')$ be a holomorphic linear action of $T^r$ \textup{(resp.} $T^s)$ on $\mc^n$, with all $\rho$-invariant \textup{(resp.} $\rho'$-invariant$)$ holomorphic functions on $\mc^n$ being constant. Let $D$ and $D'$ be two bounded quasi-Reinhardt domains with respect to $\rho$ and $\rho'$ respectively, containing the origin. Let $f=(f_1,\cdots, f_n)$ be a biholomorphism between $D$ and $D'$ fixing the origin. Then $f$ is a polynomial mapping with degree less than or equal to the quasi-resonance order of $\rho$ and $\rho'$. More precisely, each $f_i$ $(1\le i\le n)$ is a polynomial with degree less than or equal to the $i$-th quasi-resonance order of $\rho$ and $\rho'$. 
\end{thm}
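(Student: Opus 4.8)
The plan is to combine a Kaup-type polynomiality statement with a weight analysis refining the classical linearization argument; the new ingredient is the bookkeeping of how a biholomorphism is forced to match up two \emph{a priori} unrelated torus actions. Since a holomorphic action of a torus on $\mc^n$ is simultaneously diagonalizable, I may assume $\rho(t)z=(t^{\alpha_1}z_1,\dots,t^{\alpha_n}z_n)$ with $\alpha_i\in\mz^r$ and $\rho'(\sigma)w=(\sigma^{\beta_1}w_1,\dots,\sigma^{\beta_n}w_n)$ with $\beta_i\in\mz^s$. Expanding an invariant function in its Taylor series, $\cO(\mc^n)^\rho=\mc$ is equivalent to the nonexistence of $m\in\mn^n\setminus\{0\}$ with $\sum_i m_i\alpha_i=0$, and a separating-hyperplane argument then gives $v\in\mathbb{Q}^r$ with $\langle v,\alpha_i\rangle>0$ for every $i$; hence for each $\gamma\in\mz^r$ the set $\{m\in\mn^n:\sum_i m_i\alpha_i=\gamma\}$ is finite, with $|m|_1$ bounded in terms of $\langle v,\gamma\rangle$. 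This yields the $\mz^r$-grading $\cO(\mc^n)=\bigoplus_\gamma\cO_\gamma$ by $\rho$-weight (and a $\mz^s$-grading on the target); write $f_j=\sum_\gamma f_{j,\gamma}$ for the corresponding decomposition of the Taylor series of $f_j$, with $f_{j,\gamma}=0$ unless $\langle v,\gamma\rangle>0$ because $f(0)=0$. In these terms the theorem asserts that only finitely many $\gamma$ occur in each $f_j$, and the admissible ones are constrained so that $\max|m|_1$ over the corresponding monomials is the $j$-th quasi-resonance order of $\rho$ and $\rho'$.

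The core is a rigidity statement about the isotropy groups. As $D$ and $D'$ are bounded, $\mathrm{Aut}(D)$ and $\mathrm{Aut}(D')$ act properly, so the isotropies $\mathrm{Aut}_0(D)$ and $\mathrm{Aut}_0(D')$ at the origin are compact, and by Cartan's uniqueness theorem the $1$-jet maps $g\mapsto g'(0)$ embed them as compact (hence conjugate-to-unitary) subgroups of $GL(n,\mc)$. For $t\in T^r$ the map $f\circ\rho(t)\circ f^{-1}$ lies in $\mathrm{Aut}_0(D')$ with $1$-jet $f'(0)\,\mathrm{diag}(t^{\alpha_j})\,f'(0)^{-1}$, and for $\sigma\in T^s$ the map $f^{-1}\circ\rho'(\sigma)\circ f$ lies in $\mathrm{Aut}_0(D)$ with $1$-jet $f'(0)^{-1}\mathrm{diag}(\sigma^{\beta_j})f'(0)$. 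I would study the compact subgroup of $\mathrm{Aut}_0(D')$ generated by $\rho'(T^s)$ and $f\rho(T^r)f^{-1}$ (and its mirror in $\mathrm{Aut}_0(D)$), linearize its action via Bochner's linearization theorem or argue directly on $1$-jets, and deduce that a $\rho$-weight $\gamma$ can appear in $f_j$ only when it is compatible with $\beta_j$ relative to the two weight lattices — precisely the admissibility built into the quasi-resonance order. Polynomiality of $f$ and $f^{-1}$, which one obtains either from \cite{K:auto,Heinzner92} by transporting via $f$ or by reworking Kaup's Fourier/Cauchy estimate for these one-parameter families together with their inverses, comes out of the same analysis.

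Granting this, the degree bound is a count: for each $j$, $f_j=\sum_{\gamma\ \mathrm{admissible}}f_{j,\gamma}$ is a finite sum (finitely many admissible $\gamma$, each $\cO_\gamma$ finite-dimensional), hence a polynomial, and $\deg f_j=\max\{|m|_1:m\in\mn^n,\ \sum_i m_i\alpha_i=\gamma,\ \gamma\ \text{admissible for}\ \beta_j\}$, which by definition is the $j$-th quasi-resonance order of $\rho$ and $\rho'$; the unindexed bound follows by maximizing over $j$.

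I expect the only real difficulty to be the rigidity step. Classical Cartan is the special case where $\rho'(\sigma)'(0)$ is scalar, so the conjugation above is trivial, $f\rho(t)f^{-1}$ and $\rho'(t)$ share their $1$-jet, Cartan's uniqueness forces $f\circ\rho(t)=\rho'(t)\circ f$, and $f$ is linear; in general the conjugation by $f'(0)$ is nontrivial, the two tori need not even have the same rank, and one must extract the exact combinatorial constraint — the quasi-resonance condition — from the interplay of $f$, the compactness of the isotropy group, and the two weight lattices. This is precisely where the sharp bound turns out to be the quasi-resonance order rather than the naive resonance order of a single weight system, so it is the step I would prepare most carefully.
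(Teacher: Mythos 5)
Your reduction to weights is fine: diagonalizing the two actions, observing that $\cO(\mc^n)^\rho=\mc$ forbids nonzero $m\in\mn^n$ with $\sum_i m_i\alpha_i=0$, and extracting a functional $v$ with $\langle v,\alpha_i\rangle>0$ does give finite-dimensionality of each weight space with a degree bound (this is the paper's Proposition 4.1, proved there by a slightly clumsier contradiction argument; your Gordan-type argument is cleaner). But the theorem is not yet reduced to a ``count'': the entire content is the claim that a source-weight $\gamma$ can occur in $f_j$ only when $\gamma$ lies in the $j$-th quasi-resonance set, i.e.\ when $d_\gamma\le\mu'_j$, and this you do not prove. You announce a ``rigidity step'' about the compact subgroup of $\mathrm{Aut}_0(D')$ generated by $\rho'(T^s)$ and $f\rho(T^r)f^{-1}$, to be linearized via Bochner, and acknowledge yourself that this is the step you have not carried out. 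Beyond being unexecuted, the proposed mechanism looks misaimed: the condition $d_\gamma\le\mu'_j$ is not a compatibility condition between the two weight lattices that could be read off from $1$-jets or from the group generated by the two tori (which need not even be abelian, and whose structure presupposes information about $f$). It couples the \emph{minimal} degree of monomials of source-weight $\gamma$ with the \emph{maximal} degree of monomials of target-weight $\mba'_j$ --- data invisible to any argument at the level of linearizations or isotropy groups.

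The paper gets this constraint by a Bergman-space duality argument that your proposal never touches. The change of variables formula gives
$\langle u(\psi\circ f),\varphi\rangle_D=\langle\psi,U(\varphi\circ F)\rangle_{D'}$ for $\varphi\in H^2(D)$, $\psi\in H^2(D')$, where $u,U$ are the Jacobians of $f$ and $F=f^{-1}$. A first application of this identity with $\psi=\mi_{D'}$, combined with the orthogonality of the constants to $\cM_{D'}$ inside $H^2(D')$ (a consequence of $V_{\chi_0}=\mc$, i.e.\ $\cO(D')^{\rho'}=\mc$), shows $u$ and $U$ are constant. Then taking $\psi=z_i$ gives $\langle uf_i,\varphi\rangle_D=\langle z_i,U\,\varphi\circ F\rangle_{D'}$; since $F(0)=0$, $\varphi\circ F$ vanishes at $0$ to order at least $\mathrm{ord}_0\varphi$, while $z_i$ lies in a weight space of $H^2(D')$ whose elements all have degree at most $\mu'_i$, so the right-hand side vanishes by orthogonality of weight spaces whenever $\varphi\in V_\mbk$ with $d_\mbk>\mu'_i$. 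Hence $f_i\in V_{i\rho'}$, which is finite-dimensional and consists of polynomials of degree at most $\nu_{i\rho'}$. If you want to salvage your outline, you would need either to supply this $L^2$ argument or to make the rigidity step precise enough to yield the same inequality $d_\gamma\le\mu'_j$; as written, the proof has a genuine gap at its central point.
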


Throughout this paper, we only consider bounded domains containing the origin. The plan of this paper is as follows. In Section \ref{S:group}, we recall some basic facts from the representation theory of compact Lie groups. In Section \ref{S:Jacobian}, we establish the important fact that a biholomorphism between two bounded quasi-Reinhardt domains has constant Jacobian determinant. In Section \ref{S:torus}, we consider the unitary representation on $H^2(D)$ induced by $\rho$, where $H^2(D)$ is the space of square integrable holomorphic functions on a bounded quasi-Reinhardt domain. In particular, we show that any minimal closed subspace of $H^2(D)$ which contains all irreducible submodules of $H^2(D)$ with the same character is finite dimensional. Using this fact, in Section \ref{S:quasi} we define the resonance order and the quasi-resonance order for quasi-Reinhardt domains and prove Theorem \ref{T:main}. Finally in Section \ref{S:special}, we give a more detailed discussion on several spacial cases of our main result.

\section{The representations of compact Lie groups}\label{S:group}

For the convenience of the reader and for setting up notations, we briefly recall some basics on the representation theory of compact Lie groups. For more detailed information, see e.g. \cite{BD:Representation}.

Let $K$ be a compact Lie group. A \textit{unitary representation} of $K$ is a Hilbert space $V$ with a continuous linear action $\rho:K\times V\rightarrow V$ such that $\rho(g):=\rho(g,\cdot):V\rightarrow V$ are isometries, $g\in K$.

Let $\rho:K\times V\rightarrow V$ be a finite dimensional representation of $K$. The \textit{character} of the representation is the function $\chi$ on $K$ defined by $\chi(g)=\text{trace}(\rho(g))$. Two finite dimensional representations are isomorphic if and only if their characters are equal. A function on $K$ is called a character of $K$ if it is the character of some representation of $K$. A character of $K$ is called \textit{irreducible} if it corresponds to an irreducible representation. We denote by $\hat K$ the set of all irreducible characters of $K$. It is known that any irreducible unitary representation of a compact Lie group must be of finite dimension.

Let $V$ be a unitary representation of $K$. Let $\chi$ be an irreducible character of $K$. Let $V_\chi$ be the minimal closed subspace of $V$ that contains all irreducible submodules of $V$ whose character are $\chi$. Then $V_\chi\bot V_{\chi'}$ for $\chi\neq\chi'$ and $V$ can be decomposed as
\begin{equation}\label{eqn:rep decompose}
V=\overline\bigoplus_{\chi\in \hat K}V_\chi,
\end{equation}
where $\overline\bigoplus$ means Hilbert direct sum.

If $\chi$ is a character of $K$, then it can be uniquely decomposed as $\chi=m_1\chi_1+\cdots+m_k\chi$, where $m_i$ are positive integers and $\chi_i$ are distinct irreducible characters of $K$. We denote $V_\chi$ the subspace $V_{\chi_1}\oplus\cdots\oplus V_{\chi_k}$.

For $\chi\in \hat K$, the orthogonal projection $p_\chi$ from $V$ to $V_\chi$ associated to the decomposition \eqref{eqn:rep decompose} can be constructed explicitly as follows:
\begin{equation}\label{eqn:projection to wt space}
p_\chi(v)=\text{dim}\chi\int_K \chi(g^{-1})\rho(g)v dg,
\end{equation}
where $dg$ is the Haar measure on $K$, and $\text{dim}\chi$ is the dimension of the representation associated to $\chi$.

For example, if $\chi_0$ is the trivial irreducible character of $K$, namely $\chi_0\equiv 1$, then $V_{\chi_0}=V^K$ is the subspace of $V$ consisting of $K$-invariant vectors, and the projection $p_{\chi_0}$ from $V$ to $V^K$ is given by the averaging $v\mapsto \int_K \rho(g)vdg$.

\section{The Jacobian of Biholomorphisms between invariant domains}\label{S:Jacobian}

Let $K$ and $K'$ be two compact Lie groups. Let $\rho:K\rightarrow \mc^n$ and $\rho':K'\rightarrow \mc^n$ be continuous representations with $\cO(\mc^n)^\rho=\mc$ and $\cO(\mc^n)^{\rho'}=\mc$. The aim of this section is to prove the following

\begin{prop}\label{prop:constant Jac}
Let $D$ and $D'$ be two bounded domains in $\mc^n$ containing the origin, which are $\rho$-invariant and $\rho'$-invariant respectively. Then any biholomorphism from $D$ to $D'$ fixing the origin has constant Jacobian determinant.
\end{prop}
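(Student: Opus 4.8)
The plan is to exploit averaging over the compact groups to replace the biholomorphism $f$ by an invariant object whose Jacobian we can control. First I would consider the holomorphic function $J(z):=\det f'(z)$ on $D$. Since $f$ is a biholomorphism, $J$ is nowhere zero on $D$, so $\log|J|$ is pluriharmonic, and $1/J$ is also holomorphic on $D$. The key idea is to push $J$ around the group orbits. For each $g\in K$ and $g'\in K'$, the map $\rho'(g')\circ f\circ\rho(g)^{-1}$ is again a biholomorphism from $D$ to $D'$ fixing the origin (here I use that $\rho(g)$ preserves $D$ and $\rho'(g')$ preserves $D'$); applying the chain rule and the fact that $\rho(g),\rho'(g')$ are linear, its Jacobian determinant at $z$ equals $\det\rho'(g')\cdot J(\rho(g)^{-1}z)\cdot\det\rho(g)^{-1}$, i.e. a unimodular constant times $J$ evaluated at a rotated point.

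Next I would integrate. Because $D$ and $D'$ are bounded, $J$ and $1/J$ are bounded? — not obviously, but $|J|$ is bounded below away from nothing in general; what is true is that $J$ lies in $\cO(D)$ and so does $1/J$, and after averaging we can work with $|J|^2$ which is plurisubharmonic-free issues aside, integrable against any finite measure on a relatively compact subdomain. The cleaner route: consider the holomorphic function $h:=J$ and average its logarithm's real part. Set $u(z)=\log|J(z)|$, a pluriharmonic function on $D$. From the identity above, for all $g\in K$ we get $u(\rho(g)z)=u(z)+c(g)$ for some constant $c(g)$ independent of $z$ (comparing $f$ with $f\circ\rho(g)^{-1}$, whose Jacobians differ by the constant $\det\rho(g)^{-1}$ in modulus). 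Thus the function $u(\rho(g)z)-u(z)$ is constant in $z$; differentiating in $z$ shows $\partial u$ is $\rho$-invariant as a holomorphic $1$-form, i.e. $\partial\log J$ is a $\rho$-invariant holomorphic $1$-form on $D$. Likewise, comparing $f$ with $\rho'(g')\circ f$ gives no new information on $z$-dependence. Since $\partial\log J = \sum (\partial_i J/J)\,dz_i$ and each component $\partial_i J/J$ is a holomorphic function on $D$, $\rho$-invariance of the form forces a transformation rule on these components under $\rho$; but here I would instead invoke $\cO(\mc^n)^\rho=\mc$ more carefully: the components need not extend to $\mc^n$.

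So the honest step is to first establish that $J$ extends holomorphically to all of $\mc^n$, or at least that $\log J$ does modulo the group action. I would argue: the function $z\mapsto u(\rho(g)z)-u(z)=c(g)$ means $v(z):=u(z)$ satisfies $v\circ\rho(g)=v+c(g)$; since $\rho$ has image in a compact group and $\cO(\mc^n)^\rho=\mc$ (so the only invariant functions are constants, which forces the orbits of $\rho$ to be "large" — in particular the $\mr$-span of the orbit directions is everything), one shows $c:K\to\mr$ is a continuous homomorphism, hence its derivative $\partial(u\circ\rho(g))|_{g=e}$ in the group directions, evaluated via the infinitesimal action, shows that $\partial u$ annihilates all the vector fields generated by $\rho$. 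Because $\cO(\mc^n)^\rho=\mc$, these holomorphic vector fields span the full tangent space at a generic point (otherwise their common holomorphic first integrals would be non-constant invariants), so $\partial u\equiv 0$ on a dense open set, hence on $D$; therefore $u=\log|J|$ is constant, so $|J|$ is constant, so $J$ is constant by the open mapping theorem.

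The main obstacle I anticipate is the step asserting that $\cO(\mc^n)^\rho=\mc$ implies the infinitesimal generators of the $K$-action span the tangent space at a generic point: this is a Rosenlicht-type statement about invariants separating orbits generically, and one must be careful because $K$ acts, a priori, only real-analytically, while we want a conclusion about \emph{holomorphic} vector fields and \emph{holomorphic} invariants. For the torus case at hand ($K=T^r$), however, the action is by a diagonalizable linear representation, the holomorphic invariants are generated by monomials $z^\alpha$ with $\alpha$ in the kernel of the weight map, and $\cO(\mc^n)^\rho=\mc$ is equivalent to that kernel being trivial, i.e. the weights positively span — this makes the span-of-generators claim elementary by a direct monomial computation, and I would carry out the general compact-group case by complexifying $K$ and reducing to the reductive (hence essentially torus-and-semisimple) situation, or simply by citing the analogous fact used in \cite{K:auto, Heinzner92}. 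Once constancy of $|J|$ is in hand, constancy of $J$ and the proposition follow immediately.
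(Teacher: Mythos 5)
There is a genuine gap at the central step. You assert that $u(\rho(g)z)=u(z)+c(g)$ with $c(g)$ independent of $z$, justified by ``comparing $f$ with $f\circ\rho(g)^{-1}$, whose Jacobians differ by the constant $\det\rho(g)^{-1}$ in modulus.'' But the Jacobian determinant of $f\circ\rho(g)^{-1}$ at $z$ is $\det\rho(g)^{-1}\cdot J(\rho(g)^{-1}z)$: it is $J$ \emph{precomposed with a rotation} and multiplied by a constant, not $J(z)$ times a constant. The two are proportional as functions of $z$ precisely when $J(\rho(g)^{-1}z)/J(z)$ has constant modulus, i.e.\ when the origin-fixing automorphism $f\circ\rho(g)\circ f^{-1}$ of $D'$ has Jacobian of constant modulus --- which is exactly (a special case of) the statement you are trying to prove. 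A general biholomorphism $f$ carries no equivariance relating $\rho$ and $\rho'$, so the functional equation for $u=\log|J|$ is not available, and everything downstream (the conclusion that $\partial u$ annihilates the infinitesimal generators, hence that $u$ is constant) collapses with it. The rest of the writeup correctly flags its own soft spots (extension to $\mc^n$, the Rosenlicht-type spanning claim), but those are secondary; the argument never gets past the unproven invariance of $|J|$ up to a cocycle.

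The paper avoids this entirely by never using the group action on the map $f$, only on the domains. Working in $H^2(D)$, the isotypic decomposition under $K$ together with $\cO(D)^\rho=\mc$ shows that the constant function $\mi_D$ is orthogonal to $\cM_D=\{\varphi\in H^2(D):\varphi(0)=0\}$ (equivalently, the Bergman kernel $K_D(\cdot,0)$ is constant), and likewise for $D'$. The change-of-variables identity $\langle u(\psi\circ f),\varphi\rangle_D=\langle\psi,U(\varphi\circ F)\rangle_{D'}$ with $\psi=\mi_{D'}$ then gives $\langle u,\varphi\rangle_D=0$ for all $\varphi\in\cM_D$, so $u\in\mc\,\mi_D$. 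If you want to salvage your averaging idea, the cleanest repair is to reroute it through this orthogonality statement (or the equivalent Bergman-kernel transformation law $K_{D'}(f(z),0)J(z)\overline{J(0)}=K_D(z,0)$), which is where the hypotheses $\cO(\mc^n)^\rho=\cO(\mc^n)^{\rho'}=\mc$ actually enter.
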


For a bounded domain $D\subset \mc^n$ containing the origin, we denote by $\mi_D$ the constant function on $D$ with value 1, and $\cM_D=\{\varphi\in H^2(D); \varphi(0)=0\}$, where $H^2(D)$ is the Hilbert space of square integrable holomorphic functions on $D$. The inner product of $\varphi, \psi\in H^2(D)$ is given by $\langle \varphi, \psi\rangle_D:=\int_{D}\varphi\bar{\psi}dV$, where $dV$ is the Lebesgue measure on $\mc^n$.

From \cite[p.708]{Heinzner92}, we have the following

\begin{lem}\label{L:constant}
Let $D$ be as in Proposition \ref{prop:constant Jac}. Then $\cO(D)^\rho=\mc$.
\end{lem}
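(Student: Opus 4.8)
The plan is to reduce an invariant holomorphic function on $D$ to its homogeneous Taylor components and to observe that each such component is an invariant \emph{polynomial} on all of $\mc^n$, hence constant by hypothesis; Heinzner's argument is more general, but in the present linear situation a direct elementary proof is available.

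First I would fix $f\in\cO(D)^\rho$ and, using that $0\in D$, expand $f$ near the origin as a sum of homogeneous polynomials $f=\sum_{j\ge 0}P_j$, with $P_j$ homogeneous of degree $j$, the series converging on some ball $B=\{|z|<\varepsilon\}\subset D$. The key observation is that invariance of $f$ propagates to each $P_j$: since the representation $\rho\colon K\to GL(n,\mc)$ is linear, $z\mapsto P_j(\rho(g)z)$ is again homogeneous of degree $j$, so from $f(\rho(g)z)=f(z)$ (valid for $z$ close enough to $0$) together with the uniqueness of the homogeneous expansion we obtain $P_j(\rho(g)z)=P_j(z)$ for all $g\in K$ and all $z$ in a neighborhood of the origin.

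Next, since $P_j$ and $P_j\circ\rho(g)$ are polynomials agreeing on a nonempty open set, they agree on all of $\mc^n$; thus each $P_j$ lies in $\cO(\mc^n)^\rho=\mc$ and is therefore constant. A constant homogeneous polynomial of positive degree must vanish, so $P_j\equiv 0$ for $j\ge 1$ and $f\equiv P_0$ on $B$. Because $D$ is connected, the identity theorem forces $f$ to equal the constant $P_0$ throughout $D$, and hence $\cO(D)^\rho=\mc$.

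The only genuinely technical point is the passage to a neighborhood of $0$ on which $f(\rho(g)z)=f(z)$ holds simultaneously for all $g\in K$: one needs $\rho(g)z\in D$ for every $g\in K$ and every sufficiently small $z$, which follows from $\rho(g)0=0\in D$, the openness of $D$, and the compactness of $K$ (alternatively, after replacing the Hermitian metric by a $\rho$-invariant one, small balls centered at the origin become literally $\rho$-invariant). Everything else is the standard bookkeeping with homogeneous expansions and the identity theorem, so this is the main — and essentially the only — obstacle.
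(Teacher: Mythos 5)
Your argument is correct. Note, however, that the paper does not prove this lemma at all: it simply cites Heinzner \cite{Heinzner92}, whose result is established in a more general framework (Stein spaces with compact group actions). What you have done is supply a self-contained elementary proof that works precisely because the action here is \emph{linear}: a linear $\rho(g)$ preserves the degree of homogeneous polynomials, so the homogeneous Taylor components $P_j$ of an invariant $f\in\cO(D)^\rho$ are themselves invariant near $0$, hence (being polynomials) invariant on all of $\mc^n$, hence constant by the hypothesis $\cO(\mc^n)^\rho=\mc$, hence zero for $j\ge 1$; the identity theorem on the connected set $D$ finishes the job. You also correctly flagged and resolved the one technical point, namely producing a neighborhood $U$ of $0$ with $\rho(g)U\subset D$ for all $g$ simultaneously (either via $\sup_{g}\|\rho(g)\|<\infty$ by compactness, or by averaging the Hermitian metric over the Haar measure to make small balls invariant). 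The trade-off between the two routes is the usual one: the citation to Heinzner covers nonlinear actions and more general invariant domains, while your argument is shorter, entirely elementary, and exactly adequate for the linear setting of Proposition \ref{prop:constant Jac} in which the lemma is actually used.
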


We first prove the following

\begin{lem}\label{lem:induc rep unitary}
Let $D$ be as in Proposition \ref{prop:constant Jac}. Then $\mc \mi_D\bot \cM_D$.
\end{lem}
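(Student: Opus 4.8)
The plan is to recognize $H^2(D)$ as a unitary representation of the compact Lie group $K$ in which $\mc\,\mi_D$ is exactly the subspace of invariant vectors, and then to combine the averaging projection with evaluation at the origin.

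First I would check that the natural action of $K$ on $H^2(D)$ given by $\pi(g)\varphi:=\varphi\circ\rho(g^{-1})$ is a (strongly continuous) unitary representation. Well-definedness uses that $D$ is $\rho$-invariant, so $\rho(g^{-1})$ maps $D$ biholomorphically onto $D$. To see that each $\pi(g)$ is an isometry, note that the continuous homomorphism $g\mapsto|\det\rho(g)|$ from $K$ into the multiplicative group $\mr_{>0}$ has compact image, hence is identically $1$; thus the real Jacobian of $\rho(g)$ is $1$ and the change of variables formula yields $\|\pi(g)\varphi\|_D=\|\varphi\|_D$. Strong continuity follows from continuity of $\rho$ by a routine dominated-convergence argument, using that the $\pi(g)$ are uniform isometries. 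Then $\varphi\in H^2(D)$ is $\pi$-invariant if and only if $\varphi$ is $\rho$-invariant, so by Lemma \ref{L:constant}, together with boundedness of $D$ (which puts the constants in $H^2(D)$), the invariant subspace $H^2(D)^\rho$ equals $\mc\,\mi_D$.

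Next I would use the orthogonal projection $P:=p_{\chi_0}$ onto $H^2(D)^\rho=\mc\,\mi_D$ coming from the decomposition \eqref{eqn:rep decompose}, which by \eqref{eqn:projection to wt space} is the averaging operator $P\varphi=\int_K\pi(g)\varphi\,dg$. Fix $\varphi\in\cM_D$. Since $\rho(g^{-1})$ is linear it fixes the origin, so $(\pi(g)\varphi)(0)=\varphi(0)=0$ for every $g\in K$, whence $(P\varphi)(0)=\int_K(\pi(g)\varphi)(0)\,dg=0$. But $P\varphi\in\mc\,\mi_D$, so necessarily $P\varphi=0$. Finally, because $P$ is a self-adjoint projection fixing $\mi_D$,
\[
\langle \mi_D,\varphi\rangle_D = \langle P\mi_D,\varphi\rangle_D = \langle \mi_D,P\varphi\rangle_D = \langle \mi_D,0\rangle_D = 0,
\]
and since $\varphi\in\cM_D$ was arbitrary this gives $\mc\,\mi_D\perp\cM_D$.

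I do not expect a serious obstacle. The only points needing a little care are the verification that $\pi$ is a genuine strongly continuous unitary representation — which is precisely where $|\det\rho|\equiv1$ is used — and the appeal to Lemma \ref{L:constant} to identify the invariant subspace with $\mc\,\mi_D$; everything else is the standard averaging argument for actions of compact groups.
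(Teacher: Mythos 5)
Your proof is correct, and it takes a somewhat different (and slicker) route than the paper's. The paper works with the full isotypic decomposition \eqref{eqn:rep decompose}: for each nontrivial irreducible character $\chi$ it takes $P\in V_\chi$, splits $P=P_0+P'$ with $P_0$ constant and $P'\in\cM_D$, applies $p_\chi$ to see that $P_0\in V_\chi\cap V_{\chi_0}$ must vanish, and concludes that every nontrivial isotypic component lies inside $\cM_D$; the lemma then follows from $V_{\chi_0}\bot V_\chi$. You instead use only the single projection $p_{\chi_0}$ onto the invariants: averaging a function vanishing at the origin over the (origin-fixing, linear) group action still vanishes at the origin, so $p_{\chi_0}$ annihilates $\cM_D$, and self-adjointness of the orthogonal projection finishes the argument. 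Your version needs neither the full decomposition nor the orthogonality of distinct isotypic components, and it isolates more cleanly where each hypothesis enters ($|\det\rho(g)|\equiv 1$ for unitarity, Lemma \ref{L:constant} to identify $H^2(D)^\rho$ with $\mc\,\mi_D$, and linearity of the action to fix the origin). The one step worth a word of justification is the interchange $\bigl(\int_K\pi(g)\varphi\,dg\bigr)(0)=\int_K(\pi(g)\varphi)(0)\,dg$: this is legitimate because point evaluation is a continuous linear functional on the Bergman space of a bounded domain, so it commutes with the Bochner integral defining $p_{\chi_0}$ — the paper's own appeal to ``comparing coefficients'' in \eqref{eqn:int of parts} rests on the same fact.
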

\begin{proof}
For an irreducible character $\chi$ of $K$, let $V_{\chi}$ be the minimal closed subspace of $H^2(D)$ which contains all irreducible $K$-submodules with character $\chi$. By the assumption and Lemma \ref{L:constant}, we have $V_{\chi_0}=\mc\mi_D$, where $\chi_0$ is the trivial character.

Since we have the following orthogonal decomposition
$$H^2(D)=\overline\bigoplus_{\chi\in\hat K}V_{\chi},$$
it suffices to prove that $\mc\mi_D\bot V_{\chi}$ for any nontrivial irreducible character $\chi$ of $K$.

Let $P\in V_{\chi}$. We write $P=P_0+P'$ with $P_0\in \mc\mi_D$ and $P'\in\cM_D$. By \eqref{eqn:projection to wt space}, we have
\begin{equation}\label{eqn:int of parts}
p_\chi(P)=\text{dim}\chi\int_K \chi(g^{-1})(P_0(g(z))+P'(g(z)))dg=P_0+P'.
\end{equation}
Since the action of $K$ on $D$ fixes the origin, by comparing the coefficients on both sides of the second equality in \eqref{eqn:int of parts}, it is easy to see that $p_\chi(P_0)= P_0$ and $p_\chi(P')=P'$. So we have $P_0, P'\in V_\chi$, which is impossible if $P_0\neq 0$, since $V_{\chi_0}\bot V_\chi$. So we have $P=P'\in V_\chi$ and $P\bot V_{\chi_0}$.
\end{proof}

We now prove Proposition \ref{prop:constant Jac}.

\begin{proof}
Assume $f:D\rightarrow D'$ is a biholomorphism with $f(0)=0$. Let $u$ and $U$ be the Jacobian determinant of $f$ and $F:=f^{-1}$ respectively. By a standard argument using the change of variables formula, we have
\begin{equation}\label{eqn:inner}
\langle u(\psi\circ f),\varphi\rangle_D=\langle\psi,U(\varphi\circ F)\rangle_{D'},
\end{equation}
where $\varphi\in H^2(D), \psi\in H^2(D')$. In particular, we get
$$\langle u,\varphi\rangle_D=\langle\mi_{D'}, U(\varphi\circ F)\rangle_{D'}$$
for all $\varphi\in H^2(D)$. Since $f(0)=0$, $U(\varphi\circ F)\in \cM_{D'}$ as long as $\varphi\in\cM_D$. So, by Lemma \ref{lem:induc rep unitary}, $\langle u,\varphi\rangle_D=0$ for all $\varphi\in \cM_D$. Applying Lemma \ref{lem:induc rep unitary} again to $D$, we see that $u\in \mc \mi_D$ and hence is a constant.
\end{proof}

For later use, we also need the following 

\begin{lem}\cite[p.712]{Heinzner92}\label{L:origin}
Let $D$ and $D'$ be as in Proposition \ref{prop:constant Jac}. If $D$ and $D'$ are biholomorphic, then there is a biholomorphism between $D$ and $D'$ fixing the origin.
\end{lem}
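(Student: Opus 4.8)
The statement I am asked to prove is Lemma \ref{L:origin}: if two bounded domains $D, D'\subset\mc^n$ containing the origin, invariant under $\rho$ and $\rho'$ respectively (with $\cO(\mc^n)^\rho=\cO(\mc^n)^{\rho'}=\mc$), are biholomorphic, then there is a biholomorphism between them fixing the origin. Although this is quoted from Heinzner, let me sketch how I would prove it.

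The plan is to use the uniqueness of the circular / equivariant normal form at a special point together with the compactness of the acting groups. First I would fix a biholomorphism $g:D\to D'$ (not necessarily fixing the origin). The key structural input is that $D$ admits a distinguished point — its ``center'' — characterized intrinsically, so that $g$ must carry the center of $D$ to the center of $D'$. Concretely, I would exploit the Bergman theory: since $D$ is bounded, it has a Bergman kernel $K_D$ and a Bergman metric; the compact group $K=\rho(T^r)$ acts by isometries of the Bergman metric (since it acts by biholomorphisms of $D$), and because $\cO(D)^\rho=\mc$ by Lemma \ref{L:constant}, the orbit structure forces the origin to be a fixed point of the whole connected component of the identity in $\mathrm{Aut}(D)$ — more precisely, the origin is the unique common zero in $D$ of all of $\cM_D$ in a suitable sense, or equivalently the origin is the unique point fixed by $K$. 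Indeed, $0$ is a fixed point of the linear action $\rho$, and any other fixed point $p$ would give, via the coordinate functions suitably averaged, a nonconstant $\rho$-invariant holomorphic function unless $p=0$, contradicting Lemma \ref{L:constant}; so $0$ is the unique $K$-fixed point of $D$, and likewise $0$ is the unique $K'$-fixed point of $D'$.

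Next I would transport the group: the pulled-back action $K'' := g^{-1}K'g$ is a compact group of automorphisms of $D$ isomorphic to $K'$, with unique fixed point $g^{-1}(0)\in D$. Now I would invoke the conjugacy of maximal compact subgroups (Cartan's theorem) in the Lie group $\mathrm{Aut}(D)$: both $K$ and $K''$ lie in maximal compact subgroups of $\mathrm{Aut}(D)$, and these maximal compact subgroups are conjugate by an element $h\in\mathrm{Aut}(D)$. Since the fixed-point set of a compact group action on a negatively curved space (the Bergman metric on a bounded domain is Kähler, and one can use the Bergman metric together with the fact that the fixed set of a compact group of isometries is nonempty and totally geodesic) is well-behaved, one deduces that $h$ can be chosen so that $h\circ g^{-1}(0)$ is a $K$-fixed point of $D$, hence equals $0$. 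Then the composition $F := g\circ h^{-1}$ — or rather $h\circ g^{-1}$ inverted appropriately — is a biholomorphism $D\to D'$ with $F(0)=0$, as desired. The cleanest route: set $q = g^{-1}(0)$; choose $h\in\mathrm{Aut}(D)$ with $h(0)=q$ (such $h$ exists because $\mathrm{Aut}(D)$ acts transitively enough on the relevant orbit — here one genuinely needs that $q$ and $0$ lie in the same $\mathrm{Aut}(D)$-orbit, which is where the maximal-compact-conjugacy argument delivers the needed element); then $g\circ h:D\to D'$ fixes the origin.

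The main obstacle is exactly the last point: producing an automorphism of $D$ moving $g^{-1}(0)$ to $0$. One does not get this for free — a general bounded domain need not act transitively. The resolution, following Heinzner, is representation-theoretic: one lifts everything to the linearization and uses that $D$, being $\rho$-invariant with trivial invariants, is contained in and cofinal with a ``minimal'' equivariant ball-like exhaustion, so that $\mathrm{Aut}(D)$ contains enough of the linear group $GL(n,\mc)$ — or rather, one works with the group generated by $K$, $K''$ and shows the two fixed points are related by conjugation of these compact groups inside $\mathrm{Aut}(D)^0$, using that $\mathrm{Aut}(D)^0$ is a Lie group (Cartan) and the conjugacy theorem for maximal tori / maximal compact subgroups. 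Since the excerpt cites Heinzner's page 712 for the full argument, in the paper I would simply reference it; here I have indicated the mechanism. Therefore a fixed-point-preserving biholomorphism exists. $\qed$
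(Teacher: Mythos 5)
The paper does not actually prove Lemma \ref{L:origin}; it is quoted directly from \cite[p.712]{Heinzner92} with no argument supplied. So your sketch can only be judged on its own merits. Its first step is correct and well justified: averaging a linear functional over $K$ shows that $(\mc^n)^K=\{0\}$ (a nonzero fixed subspace would yield a nonconstant invariant linear function, contradicting $\cO(\mc^n)^\rho=\mc$), hence $0$ is the unique $K$-fixed point of $D$ and $g^{-1}(0)$ is the unique fixed point of $g^{-1}K'g$. You also correctly isolate the crux, namely producing $h\in\mathrm{Aut}(D)$ with $h(0)=g^{-1}(0)$, and the mechanism you name (conjugacy of compact subgroups inside the Lie group $\mathrm{Aut}(D)$, combined with uniqueness of fixed points) is indeed the right one.

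However, the two facts that make this mechanism run are asserted rather than established, and the justifications you offer for them do not hold. First, the conjugacy theorem for maximal compact subgroups requires $\mathrm{Aut}(D)$ to be almost connected (or at least to have finitely many components); for a general bounded domain $\mathrm{Aut}(D)$ can have infinitely many components, so this has to be extracted from the special structure of $D$, which you do not do. Second, even granting conjugacy, you need the maximal compact subgroup $L\supseteq K$ to have a \emph{nonempty} fixed-point set in $D$ (it is then automatically $\{0\}$, being contained in $\mathrm{Fix}(K)$). Your appeal to negative curvature of the Bergman metric is unfounded: the Bergman metric of a general bounded domain is neither complete nor nonpositively curved, so Cartan's fixed-point theorem does not apply, and the fact that every compact subgroup of $\mathrm{Aut}(D)$ of a bounded domain has a fixed point is itself a nontrivial result that must be cited or proved. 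Since you close by deferring to \cite{Heinzner92} for ``the full argument,'' your proposal is in substance the same citation the paper makes, wrapped around a heuristic whose two load-bearing steps remain open; as a citation it is acceptable, but as a proof it is incomplete.
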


\section{Torus actions on $\mc^n$ and Quasi-Reinhardt domains}\label{S:torus}

Let $\rho:T^r\rightarrow GL(n,\mc)$ be a holomorphic linear action of $T^r$ on $\mc^n$ such that $\cO(\mc^n)^\rho=\mc$. Let $D$ be a bounded quasi-Reinhardt domain in $\mc^n$ with respect to $\rho$ with $0\in D$. Note that the real Jacobian of $\rho(\lambda)$ has to be 1 for all $\lambda\in T^r$. The action of $T^r$ on $\mc^n$ preserves the Lebesgue measure on $\mc^n$. So the induced action of $T^r$ on $H^2(D)$ is a unitary representation. 

Since $T^r$ is an Abelian group, all irreducible submodules of $H^2(D)$ are one-dimensional. Thus the set $\hat T^r$ of irreducible characters of $T^r$ is indexed by $\mz^r$. The character corresponding to $\mbk=(k_1,\cdots, k_r)\in \mz^r$ is given by $\lambda=(\lambda_1, \cdots, \lambda_r)\mapsto\lambda^\mbk:=\lambda_1^{k_1}\lambda_2^{k_2}\cdots\lambda_r^{k_r}$.

For $\mbk\in \mathbb Z^r$, let
$$V_{\mbk}:=\{\varphi\in H^2(D);\ \varphi(\rho(\lambda)z)=\lambda^{\mbk}\cdot \varphi(z),\ \forall z\in\mc^n,\ \forall \lambda\in T^r\}.$$
We have
\begin{equation}\label{E:orthogonal}
V_{\mbk_1}\bot V_{\mbk_2},\ \ \ \mbk_1\neq\mbk_2,
\end{equation}
and $H^2(D)=\overline \bigoplus_{\mbk\in\mathbb Z^r}V_{\mbk}$. Note that from Lemma \ref{L:constant} we have $V_0=\mc$.

There exist unique $\mba_i=(a_{i1},\cdots, a_{ir})\in \mz^r$, $1\leq i \leq n$, and a corresponding coordinate system $z=(z_1,\cdots,z_n)$, such that the representation $\rho$ can be written as
\begin{equation}\label{eqn:torus action}
\rho(\lambda)z=(\lambda^{\mba_1}z_1, \cdots, \lambda^{\mba_n}z_n).
\end{equation}
Denote by $A$ the $n\times r$ matrix formed by $\mba_i$, $1\le i\le n$. Then we have $\textup{rank}A=r$.

Denote by $\mn$ the set of non-negative integers. Let $\alpha=(\alpha_1,\cdots,\alpha_n)\in \mn^n$, $\mba^j=(a_{1j}, \cdots, a_{nj})$, $1\le j\le r$, and set $\beta_j:=\alpha\cdot\mba^j=\alpha_1a_{1j}+\cdots+\alpha_na_{nj}$. Denote by $\beta=(\beta_1,\cdots,\beta_r)\in \mz^r$. For $\varphi(z)=z^\alpha=z_1^{\alpha_1}\cdots z_n^{\alpha_n}$, $\varphi(\rho(\lambda)z)=\lambda^\beta z^\alpha$. So $z^\alpha\in V_{\mbk}$ if and only if $\beta=\mbk$, and $V_{\mbk}$ is expanded by such monomials $z^\alpha$. For example, we have $z_i\in V_{\mba_i}$, $i=1,\cdots, n$.

\begin{prop}\label{prop:dim wt space finite}
With the above notations, we have $\text{dim}V_{\mbk}<\infty$ for all $\mbk\in\mathbb Z^r$.
\end{prop}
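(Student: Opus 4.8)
The plan is to show that $V_{\mbk}$ is spanned by only finitely many monomials $z^\alpha$. By the discussion immediately preceding the statement, $z^\alpha \in V_{\mbk}$ if and only if $A^T\alpha = \mbk$ (reading $\mbk$ as a column vector), i.e. $\alpha \in \mn^n$ solves the linear system $\sum_{i=1}^n \alpha_i \mba_i = \mbk$. So it suffices to prove that this system has only finitely many solutions in $\mn^n$. Equivalently, writing $L = \{\alpha \in \mr^n : A^T\alpha = \mbk\}$ for the affine solution set over $\mr$, I want to show $L \cap \mn^n$ is finite; since $\mn^n$ is discrete, it is enough to show $L \cap (\mr_{\ge 0})^n$ is \emph{bounded}.

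The key point is the hypothesis $\cO(\mc^n)^\rho = \mc$, which I will translate into a positivity statement about the vectors $\mba_i$. If $L \cap (\mr_{\ge 0})^n$ were unbounded, then (being a closed convex set) it would contain a ray $\alpha_0 + t v$, $t \ge 0$, with $v \in (\mr_{\ge 0})^n \setminus \{0\}$ and $A^T v = 0$, i.e. $\sum_i v_i \mba_i = 0$ with all $v_i \ge 0$ and not all zero. I then want to derive a nonconstant $\rho$-invariant holomorphic function from such a $v$. The natural candidate: pass to a rational relation $\sum_i q_i \mba_i = 0$ with $q_i \ge 0$ rational (density of rationals among the $v_i$, staying on the subspace $\ker A^T$, keeping nonnegativity — here one can perturb within the rational points of the cone $\ker A^T \cap (\mr_{\ge 0})^n$), clear denominators to get $p_i \in \mn$ not all zero with $\sum_i p_i \mba_i = 0$, and observe that the monomial $z^p = z_1^{p_1}\cdots z_n^{p_n}$ then satisfies $z^p(\rho(\lambda)z) = \lambda^{0} z^p = z^p$, so $z^p \in \cO(\mc^n)^\rho$. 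Since some $p_i > 0$, this is nonconstant, contradicting the hypothesis. Hence $L \cap (\mr_{\ge 0})^n$ is bounded, so $L \cap \mn^n$ is finite, and $\dim V_{\mbk} < \infty$.

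An alternative, cleaner route avoids the rational-approximation step: by Gordan's theorem (a Farkas-type alternative), exactly one of the following holds — either there is $v \ge 0$, $v \ne 0$, with $A^T v = 0$, or there is $w \in \mr^r$ with $Aw > 0$ componentwise. The hypothesis $\cO(\mc^n)^\rho = \mc$ rules out the first alternative by the monomial argument above (one still needs rationality to build an honest invariant monomial, but only to contradict the \emph{first} alternative, which one can instead contradict directly: the recession cone of $(\mr_{\ge 0})^n \cap \ker A^T$ being nonzero already forces the affine slice to be unbounded, and that is all we need — actually it is cleanest to argue by contradiction from unboundedness as in the previous paragraph). Granting the second alternative, $Aw > 0$ means $\langle \mba_i, w\rangle > 0$ for all $i$, so the linear functional $\alpha \mapsto \langle \alpha, \mba^1\rangle w_1 + \cdots$ — more precisely $\alpha \mapsto \sum_i \alpha_i \langle \mba_i, w\rangle$, which equals $\langle A^T\alpha, w\rangle = \langle \mbk, w\rangle$ on $L$ — is a fixed constant on $L$ while being a strictly positive combination $\sum_i \alpha_i c_i$ ($c_i > 0$) of the coordinates; hence every $\alpha \in L \cap (\mr_{\ge 0})^n$ satisfies $0 \le \alpha_i \le \langle \mbk, w\rangle / c_i$, giving boundedness at once.

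The main obstacle is the bookkeeping in the first route: ensuring that one can produce a \emph{rational} null vector of $A^T$ lying in the nonnegative orthant from a merely real one, and that clearing denominators yields a genuine invariant \emph{monomial} rather than just a formal relation. The second route sidesteps this by invoking Gordan's theorem directly and reading off explicit coordinate bounds; I would present that one as the proof, noting only that the excluded alternative of Gordan is incompatible with boundedness (equivalently, with the desired finiteness), so no appeal to $\cO(\mc^n)^\rho = \mc$ is even strictly needed beyond what is already encoded in $\operatorname{rank} A = r$ and the geometry — though in fact the hypothesis is exactly what guarantees the cone $\ker A^T \cap (\mr_{\ge 0})^n$ is trivial, which is the clean conceptual statement to record.
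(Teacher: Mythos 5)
Your argument is correct in substance but follows a genuinely different route from the paper's. Both proofs reduce the claim to showing that $A^{T}\alpha=\mbk$ has finitely many solutions $\alpha\in\mn^{n}$, and both ultimately rest on the same key fact: the hypothesis $\cO(\mc^{n})^{\rho}=\mc$ forbids a nonzero solution of the homogeneous system $A^{T}\alpha=0$ in $\mn^{n}$ (equivalently $V_{0}=\mc$). You get from there to finiteness by convex geometry: the recession cone of the polyhedron $\{\alpha\ge 0:\ A^{T}\alpha=\mbk\}$ is $\ker A^{T}\cap(\mr_{\ge 0})^{n}$, the hypothesis forces this cone to be trivial, and Gordan's theorem then supplies $w$ with $\langle\mba_{i},w\rangle=c_{i}>0$ for all $i$, so that $\sum_{i}\alpha_{i}c_{i}=\langle\mbk,w\rangle$ is constant on the solution set and yields the explicit bounds $\alpha_{i}\le\langle\mbk,w\rangle/c_{i}$. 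The paper instead stays entirely inside $\mn^{n}$: assuming infinitely many integer solutions, it shows by a pigeonhole argument (splitting off the coordinates that stay bounded) that one can subtract two solutions of a common subsystem to produce a nonzero $\mn^{n}$-solution of the homogeneous system, contradicting $V_{0}=\mc$. Your route buys quantitative coordinate bounds and isolates the clean equivalence between $V_{0}=\mc$ and triviality of the cone; the paper's route is elementary and avoids both Gordan's theorem and any real-to-rational approximation.

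Two points need tightening. First, to rule out Gordan's alternative (a nonzero $v\ge 0$ with $A^{T}v=0$) using the hypothesis, you must pass from a real nonnegative null vector to an integer one; this is fine because $\ker A^{T}\cap(\mr_{\ge 0})^{n}$ is a polyhedral cone cut out by integer data and hence, if nontrivial, contains a nonzero rational (so, after scaling, integer) point — but you should state this cleanly rather than gesture at a perturbation. Second, your parenthetical claim that the hypothesis is not needed ``beyond what is already encoded in $\operatorname{rank}A=r$'' is false: for $n=2$, $r=1$, weights $(1,-1)$, the matrix has full rank yet $z_{1}z_{2}$ is invariant and every $V_{\mbk}$ is infinite dimensional. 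Your own closing sentence (the hypothesis is exactly what makes the cone trivial) is the correct statement; delete the misleading clause.
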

\begin{proof}
By \eqref{eqn:torus action}, we need to show that for each $\mbk\in \mz^r$, the following system
\begin{eqnarray}\label{eqn:dim weight space}
\begin{cases}
\alpha_1\cdot a_{11}+\alpha_2\cdot a_{21}+\cdots+\alpha_n\cdot a_{n1}=k_1 \\
\alpha_1\cdot a_{12}+\alpha_2\cdot a_{22}+\cdots+\alpha_n\cdot a_{n2}=k_2\\
\cdots\\
\alpha_1\cdot a_{1r}+\alpha_2\cdot a_{2r}+\cdots+\alpha_n\cdot a_{nr}=k_r
\end{cases}
\end{eqnarray}
has only finitely many solutions $\alpha=(\alpha_1, \alpha_2, \cdots, \alpha_n)\in\mn ^n$.

Note that $\alpha=0$ is the unique solution of \eqref{eqn:dim weight space} if $\mbk=0$ since $V_0=\mc$.

We argue by contradiction. Assume $\alpha^j=(\alpha^j_1, \cdots, \alpha^j_n)\in\mn ^n$, $j\geq 1$, is an infinite sequence of distinct solutions of \eqref{eqn:dim weight space}. Then we have $|\alpha^j|=\alpha^j_1+\cdots+\alpha^j_n\rightarrow\infty$ as $j\rightarrow\infty$.

If $\alpha^j_l\rightarrow\infty$ as $j\rightarrow\infty$ for all $l=1,\cdots, n$, then we can find some $j_0\gg 1$ such that $\alpha^{j_0}_l-\alpha^1_l>0$ for $l=1,\cdots, n$. Then $\alpha^{j_0}-\alpha^1\in \mn ^n$ is a nonzero solution of \eqref{eqn:dim weight space} with $\mbk=0$, which is a contradiction.

Now assume that some components of $\alpha^j$ remain uniformly bounded as $j\rightarrow\infty$. Without loss of generality, we can assume that $\alpha^j_{n_0+1},\cdots, \alpha^j_{n}$ are bounded, and $\alpha^j_l\rightarrow\infty$ for $l=1, \cdots, n_0$. Then $\alpha^j_1\cdot a_{1s}+\alpha^j_2\cdot a_{2s}+\cdots+\alpha^j_{n_0}\cdot a_{n_0s}$, $(s=1,\cdots, r,\ j\geq 1)$ can take only finitely many values. So there exists $\mbk^0=(k^0_1, \cdots, k^0_r)\in\mz^r$ such that infinitely many $\alpha'^j:=(\alpha^j_1, \cdots, \alpha^j_{n_0})$ solve the following system
\begin{eqnarray}\label{eqn:cut}
\begin{cases}
\alpha_1\cdot a_{11}+\alpha_2\cdot a_{21}+\cdots+\alpha_{n_0}\cdot a_{n_01}=k^0_1, \\
\alpha_1\cdot a_{12}+\alpha_2\cdot a_{22}+\cdots+\alpha_{n_0}\cdot a_{n_02}=k^0_2,\\
\cdots\\
\alpha_1\cdot a_{1r}+\alpha_2\cdot a_{2r}+\cdots+\alpha_{n_0}\cdot a_{n_0r}=k^0_r.
\end{cases}
\end{eqnarray}
As argued above, this implies that the homogeneous system associated to \eqref{eqn:cut}, and hence the homogeneous system associated to \eqref{eqn:dim weight space} have nonzero solutions in $\mn^{n_0}$ and $\mn^n$ respectively. This contradiction completes the proof.
\end{proof}

\begin{rem}
From the proof of Proposition \ref{prop:dim wt space finite}, we see that $\text{dim}V_{\mbk}<\infty$ for all $\mbk\in \mz^r$ is indeed equivalent to $V_0=\mc$.
\end{rem}

\section{Biholomorphisms between quasi-Reinhardt domains}\label{S:quasi}

For any $\mbk\in \mz^r$, we know by Proposition \ref{prop:dim wt space finite} that $V_{\mbk}$ consists of finitely many polynomials. Let $d_{\mbk}$ (resp. $D_{\mbk}$) be the minimum (resp. maximum) of the degrees of elements in $V_{\mbk}$. It is clear that $d_{\mbk}=0$ if and only if $\mbk=0$, and $d_{\mbk}\rightarrow\infty$ as $|\mbk|\rightarrow\infty$.

Set
$$K^l_\rho:=\{\mbk\in\mathbb Z^r;\ d_{\mbk}=1\},\ \ \ V^l_\rho:=\bigoplus_{\mbk\in K^l_\rho}V_{\mbk}.$$ 
The \textit{resonance order} of $\rho$ is defined as
$$\mu_\rho=\max\{D_{\mbk};\ \mbk\in K^l_\rho\}.$$

A more explicit description can be given as follows. For $1\le i\le n$, define the \textit{i-th resonance set} as
$$E_i:=\{\alpha;\ \alpha\cdot \mba^j=\mba_{ij},\ 1\le j\le r\},\ \ \ i.e.\ z^\alpha\in V_{\mba_i},$$
and the \textit{i-th resonance order} as
\begin{equation}\label{E:mui}
\mu_i:=\max\{|\alpha|;\ \alpha\in E_i\}=D_{\mba_i}.
\end{equation}
Then, define the \textit{resonance set} as
$$E:=\bigcup\limits_{i=1}^n E_i,$$
and the \textit{resonance order} as
$$\mu_\rho:=\max\{|\alpha|;\ \alpha\in E\}=\max\limits_{1\le i\le n} \mu_i.$$
Note that $\mu_i$ and $\mu_\rho$ are determined uniquely by $\mba_i$'s as given in \eqref{eqn:torus action}.

Let $\rho'$ be a linear action of $T^s$ on $\mc^n$ such that $\cO(\mc^n)^{\rho'}=\mc$. Let $D'\subset\mc^n$ be a bounded quasi-Reinhardt domain with respect to $\rho'$ and $0\in D'$. Then all definitions for $\rho$ carry over to $\rho'$.

We call
$$K_{\rho\rho'}:=\{\mbk\in\mathbb Z^r;\ d_\mbk\leq \mu_{\rho'}\}$$ 
the \emph{quasi-resonance set} of $\rho$ and $\rho'$, 
$$V_{\rho\rho'}:=\bigoplus_{\mbk\in K_{\rho\rho'}}V_{\mbk}$$
the \emph{quasi-resonance space} of $\rho$ and $\rho'$, and 
$$\nu_{\rho\rho'}:=\max\{D_{\mbk};\ \mbk\in K_{\rho\rho'}\}$$ 
the \emph{quasi-resonance order} of $\rho$ and $\rho'$.

More explicitly, for $1\le i\le n$, define the \textit{i-th quasi-resonance set} of $\rho$ and $\rho'$ as 
$$K_{i\rho'}=\{\mbk\in\mz^r;\ d_{\mbk}\leq \mu'_i\},$$
the \textit{i-th quasi-resonance space} of $\rho$ and $\rho'$ as
$$V_{i\rho'}:=\bigoplus_{\mbk\in K_{i\rho'}}V_{\mbk},$$
and the \textit{i-th quasi-resonance order} of $\rho$ and $\rho'$ as
$$\nu_{i\rho'}:=\max\{D_{\mbk};\ \mbk\in K_{i\rho'}\}.$$
Then the quasi-resonance order is just
$$\nu_{\rho\rho'}=\max\limits_{1\le i\le n} \nu_{i\rho'}.$$
When $\rho=\rho'$, we will drop $\rho'$ from the subscript of all the above notations.

Let $f=(f_1,\cdots,f_n):D\rightarrow D'$ be a biholomorphism fixing the origin. Let $u$ and $U$ be the Jacobian of $f$ and $F=f^{-1}$ respectively. By Proposition \ref{prop:constant Jac}, we know that both $u$ and $U$ are constants. By \eqref{eqn:inner}, we have for $\varphi\in H^2(D)$ and $1\le i\le n$
$$\langle uf_i, \varphi\rangle_D=\langle z_i, U\varphi\circ F\rangle_{D'}.$$
Since $F(0)=0$, we have $\langle z_i, U\varphi\circ F\rangle_{D'}=0$ if the zero order of $\varphi$ at $0$ is bigger than $\mu'_i$ by \eqref{E:orthogonal} and \eqref{E:mui}. Thus $\langle f_i, \varphi\rangle_D=0$ if $\varphi\in V_{\mbk}$ with $\mbk\notin K_{i\rho'}$. Hence $f_i\in V_{i\rho'}$ and the degree of $f_i$ is less than of equal to $\nu_{i\rho'}$. This completes the proof of Theorem \ref{T:main}.

Combining Theorem \ref{T:main} and Lemma \ref{L:origin}, we get

\begin{cor} 
Let $\rho$, $\rho'$, $D$ and $D'$ be as in Theorem \ref{T:main}. If $D$ and $D'$ are biholomorphic, then there is a polynomial biholomorphism between $D$ and $D'$ such that the degree of $f$ is less than or equal to the quasi-resonance order of $\rho$ and $\rho'$.
\end{cor}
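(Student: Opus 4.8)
The plan is simply to chain the two results that have just been established. Since $D$ and $D'$ are biholomorphic and are both bounded domains containing the origin that are invariant under the compact torus actions $\rho$ and $\rho'$ (for which every invariant holomorphic function on $\mc^n$ is constant), they satisfy the hypotheses of Proposition \ref{prop:constant Jac}, and hence of Lemma \ref{L:origin}. First I would invoke Lemma \ref{L:origin} to produce a biholomorphism $f:D\to D'$ with $f(0)=0$, replacing the given, a priori arbitrary, biholomorphism by this origin-fixing one.

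Next I would apply Theorem \ref{T:main} directly to $f$. Its hypotheses are met verbatim: $\rho$ and $\rho'$ are holomorphic linear torus actions on $\mc^n$ with only constant invariant holomorphic functions, $D$ and $D'$ are bounded quasi-Reinhardt domains containing the origin, and $f$ fixes the origin. The conclusion of Theorem \ref{T:main} then asserts that $f$ is a polynomial mapping of degree at most $\nu_{\rho\rho'}$, which is precisely what the corollary claims; if one wants the sharper statement, each component $f_i$ has degree at most $\nu_{i\rho'}$.

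I do not expect any genuine obstacle here: the content of the corollary is entirely carried by Lemma \ref{L:origin} (which reduces an arbitrary biholomorphism to one fixing the origin) together with Theorem \ref{T:main} (which shows origin-fixing biholomorphisms are polynomial of controlled degree). The only thing to verify is the routine observation that a bounded quasi-Reinhardt domain containing the origin does fall under the hypotheses of Lemma \ref{L:origin} — namely that $T^r$ is compact and that $\rho$ preserves Lebesgue measure, so that the induced representation on $H^2(D)$ is unitary — both of which were noted at the beginning of Section \ref{S:torus}.
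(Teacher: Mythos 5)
Your proposal is correct and is exactly the paper's argument: the authors obtain the corollary by "combining Theorem \ref{T:main} and Lemma \ref{L:origin}," i.e.\ first using Lemma \ref{L:origin} to replace an arbitrary biholomorphism by one fixing the origin, then applying Theorem \ref{T:main} to conclude it is polynomial of degree at most $\nu_{\rho\rho'}$. No issues.
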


In the special case that $\mu_\rho=\mu_{\rho'}=1$, it is clear that $\nu_{\rho\rho'}=1$. Hence, we have the following generalization of the classical Cartan's Linearity Theorem.

\begin{cor}\label{cor:biho qreinh reso=1}
Let $D$ and $D'$ as in Theorem \ref{T:main}. Assume that the resonance orders $\mu_\rho$ and $\mu_{\rho'}$ of $\rho$ and $\rho'$ are equal to one. Then any biholomorphism between $D$ and $D'$ fixing the origin is linear. And $D$ and $D'$ are biholomorphically equivalent if and only if they are linearly equivalent.
\end{cor}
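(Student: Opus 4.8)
The strategy is to read off both assertions from Theorem \ref{T:main} and Lemma \ref{L:origin}; the one substantive point is to check that the hypothesis $\mu_\rho=\mu_{\rho'}=1$ forces the quasi-resonance order to satisfy $\nu_{\rho\rho'}=1$.

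First I would pin down the resonance data for $\rho'$. For every $i$ the monomial $z_i$ is a $\rho'$-weight vector of degree one, so the $i$-th resonance order of $\rho'$ satisfies $\mu'_i\ge 1$. Since by hypothesis $\mu_{\rho'}=\max_{1\le i\le n}\mu'_i=1$, this yields $\mu'_i=1$ for every $i$. Hence the $i$-th quasi-resonance set becomes $K_{i\rho'}=\{\mbk\in\mz^r;\ d_\mbk\le 1\}$. Recalling that $d_\mbk=0$ precisely when $\mbk=0$ and $d_\mbk\ge 1$ otherwise, we obtain $K_{i\rho'}=\{0\}\cup K^l_\rho$, and therefore $\nu_{i\rho'}=\max\{D_\mbk;\ \mbk\in\{0\}\cup K^l_\rho\}=\max\{0,\mu_\rho\}=\mu_\rho=1$. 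Taking the maximum over $i$ gives $\nu_{\rho\rho'}=1$.

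Now let $f=(f_1,\dots,f_n):D\rightarrow D'$ be a biholomorphism with $f(0)=0$. By Theorem \ref{T:main} each $f_i$ is a polynomial of degree at most $\nu_{i\rho'}\le\nu_{\rho\rho'}=1$, and since $f(0)=0$ its constant term vanishes, so each $f_i$ is a linear form and $f$ is a linear map, necessarily invertible because it is a biholomorphism. For the final equivalence, one direction is immediate since a linear equivalence is in particular a biholomorphic one; for the converse, if $D$ and $D'$ are biholomorphic then Lemma \ref{L:origin} furnishes a biholomorphism between them fixing the origin, which by what precedes is linear. I do not anticipate a genuine obstacle here; the only point needing a moment's care is noting that $\mu'_i\ge 1$ for \emph{each} $i$, so that the single hypothesis $\mu_{\rho'}=1$ in fact pins down every $\mu'_i$ and the above computation goes through.
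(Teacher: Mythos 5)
Your proof is correct and follows exactly the route the paper intends: the paper disposes of the corollary with the single remark that $\mu_\rho=\mu_{\rho'}=1$ ``clearly'' forces $\nu_{\rho\rho'}=1$, and then invokes Theorem \ref{T:main} and Lemma \ref{L:origin}. You have simply supplied the verification of that claim (via $\mu'_i\ge 1$ for each $i$, $K_{i\rho'}=\{0\}\cup K^l_\rho$, and $D_0=0$), which is a worthwhile expansion but not a different argument.
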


\begin{exm}
Let $\mb^2$ be the unit ball in $\mc^2$ given by $|z_1|^2+|z_2|^2<1$. Consider biholomorphic maps $\phi_k$ given by $(w_1,w_2)=\phi_k(z_1,z_2)=(z_1,z_2+z_1^k)$, $k\in \mz^+$. Then the images $\Omega_k=\phi_k(\mb^2)$ are given by $|w_1|^2+|w_2-w_1^k|^2<1$. One readily checks that $\Omega_k$ is a quasi-circular domain with weight $(1,k)$ for each $k$. Hence for each $k$, $\phi_k$ gives a biholomorphic map between a quasi-Reinhardt domain of rank two and a quasi-Reinhardt domain of rank one.
\end{exm}

In view of the above example and Theorem \ref{T:main}, we define the \textit{maximal rank} of a quasi-Reinhardt domain $D$ to be the maximum of the ranks of all the quasi-Reinhardt domains biholomorphic to $D$.

\section{Two special cases of the main theorem}\label{S:special}

In this section, we consider two special cases of Theorem \ref{T:main}.

\subsection{Biholomorphisms between quasi-circular domains}\label{SS:quasi-circular}

Let $\rho$ and $\rho'$ be two holomorphic linear actions of $S^1$ on $\mc^n$ given by $\rho(\lambda)(z)=(\lambda^{m_1}z_1, \cdots, \lambda^{m_n}z_n)$ and $\rho'(\lambda)(w)=(\lambda^{m'_1}w_1, \cdots, \lambda^{m'_n}w_n)$. Assume that $0<m_1\le \cdots\le m_n$ and $0<m'_1\le \cdots\le m'_n$. Note that $\cO(\mc^n)^\rho=\mc$ and $\cO(\mc^n)^{\rho'}=\mc$. Let $D$ and $D'$ be two bounded quasi-circular domains containing the origin in $\mc^n$ with respect to $\rho$ and $\rho'$ respectively.

For any $k\in\mathbb N$, it is easy to see that $d_k\geq \frac{k}{m_n}$ and $D_k\leq\frac{k}{m_1}$. Thus, $D_k\leq \frac{m_n}{m_1}d_k$ and $\mu_{\rho'}\leq \frac{m'_n}{m'_1}$. Hence, the quasi-resonance order $\nu_{\rho\rho'}$ of $\rho$ and $\rho'$ satisfies the estimate 
$$\nu_{\rho\rho'}\leq \frac{m_nm'_n}{m_1m'_1}.$$

By Theorem \ref{T:main}, we have

\begin{cor}\label{cor:biholo quasi-circular}
Let $D$ and $D'$ be two quasi-circular domains as above. If $f:D\rightarrow D'$ is a biholomorphism with $f(0)=0$, then $f$ is a polynomial map whose degree is less than or equal to $\frac{m_nm'_n}{m_1m'_1}$.
\end{cor}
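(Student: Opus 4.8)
The plan is to derive Corollary \ref{cor:biholo quasi-circular} as a direct specialization of Theorem \ref{T:main} to the rank-one case, so the only real work is to compute (or bound) the quasi-resonance order $\nu_{\rho\rho'}$ in terms of the weights $m_i$ and $m'_i$. First I would recall that for a single $S^1$-action with weights $0<m_1\le\cdots\le m_n$, a monomial $z^\alpha$ lies in the weight space $V_k$ precisely when $\alpha\cdot m = m_1\alpha_1+\cdots+m_n\alpha_n = k$. Hence for a given $k\in\mn$ the minimal degree $d_k$ over elements of $V_k$ satisfies $d_k = \min\{|\alpha| : \alpha\cdot m = k\}$, and since each $m_i\le m_n$ we get $k = \sum m_i\alpha_i \le m_n|\alpha|$, i.e. $|\alpha|\ge k/m_n$, so $d_k\ge k/m_n$. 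Symmetrically, since each $m_i\ge m_1$, any $\alpha$ with $\alpha\cdot m = k$ has $|\alpha|\le k/m_1$, so $D_k\le k/m_1$ and in particular the maximum degree is finite (consistent with Proposition \ref{prop:dim wt space finite}).

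Next I would combine these two estimates: if $d_k\le \mu_{\rho'}$, then $k\le m_n d_k\le m_n\mu_{\rho'}$, and therefore $D_k\le k/m_1\le (m_n/m_1)\mu_{\rho'}$. Since $\nu_{\rho\rho'}=\max\{D_k : d_k\le\mu_{\rho'}\}$, this gives $\nu_{\rho\rho'}\le (m_n/m_1)\mu_{\rho'}$. It then remains to bound $\mu_{\rho'}$ itself: by definition $\mu_{\rho'}=\max\{D_k : d_k=1\}$, and $d_k=1$ forces $k\in\{m'_1,\dots,m'_n\}$, so $k\le m'_n$; applying the same upper bound $D_k\le k/m'_1$ for the primed action yields $\mu_{\rho'}\le m'_n/m'_1$. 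Chaining the two inequalities gives $\nu_{\rho\rho'}\le \frac{m_n m'_n}{m_1 m'_1}$, which is exactly the bound asserted. The corollary then follows immediately from Theorem \ref{T:main}, which guarantees that $f$ is polynomial of degree at most $\nu_{\rho\rho'}$.

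I do not anticipate a serious obstacle here; the argument is entirely elementary and the inequalities $d_k\ge k/m_n$ and $D_k\le k/m_1$ are already stated in the paragraph preceding the corollary. The one point that deserves a little care is bookkeeping the two composed bounds — keeping straight which weight ratio comes from $\rho$ (controlling $D_k$ vs. $d_k$) and which comes from $\rho'$ (controlling $\mu_{\rho'}$) — but this is cosmetic rather than substantive. If one wanted the sharpest statement, one could note the bound is attained in degenerate-looking cases (e.g. $m_1=\cdots=m_n$ gives $\nu_{\rho\rho'}\le m'_n/m'_1$), but for the corollary as stated nothing beyond the displayed chain of inequalities is needed.
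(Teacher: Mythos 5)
Your proposal is correct and follows essentially the same route as the paper: the inequalities $d_k\ge k/m_n$ and $D_k\le k/m_1$, the bound $\mu_{\rho'}\le m'_n/m'_1$, and the chaining $\nu_{\rho\rho'}\le (m_n/m_1)\mu_{\rho'}\le m_nm'_n/(m_1m'_1)$ are exactly the computation the paper carries out before invoking Theorem \ref{T:main}. Your write-up is in fact slightly more explicit than the paper's about why $d_k=1$ forces $k\in\{m'_1,\dots,m'_n\}$, but the substance is identical.
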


\subsection{Automorphisms of quasi-Reinhardt domains}

In the special case that $\rho=\rho'$ and $D=D'$, Theorem \ref{T:main} and Corollary \ref{cor:biho qreinh reso=1} read as follows.

\begin{thm}\label{thm:deg est auto qu-reinhar}
Let $D\subset\mc^n$ be a bounded quasi-Reinhardt domain with respect to $\rho$ with $0\in D$. If $f=(f_1, \cdots, f_n)$ is an automorphism of $D$ fixing the origin, then $f$ is a polynomial map such that $\textup{deg} f_i\leq \nu_i\ (1\leq i \leq n)$ and $\textup{deg} f\leq \nu_\rho$.
\end{thm}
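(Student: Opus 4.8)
The plan is to derive Theorem~\ref{thm:deg est auto qu-reinhar} as the specialization of Theorem~\ref{T:main} to the diagonal case $\rho=\rho'$ (hence $s=r$) and $D=D'$. Since Theorem~\ref{T:main} is already proved in this section, the only real content here is to unwind the notation: observe that when $\rho=\rho'$ the $i$-th quasi-resonance order $\nu_{i\rho'}$ becomes $\nu_i$ and the quasi-resonance order $\nu_{\rho\rho'}$ becomes $\nu_\rho$, by the stated convention that $\rho'$ is dropped from the subscripts. An automorphism $f$ of $D$ fixing the origin is in particular a biholomorphism between the bounded quasi-Reinhardt domains $D$ and $D'=D$ fixing the origin, so Theorem~\ref{T:main} applies verbatim and yields $\deg f_i\le \nu_i$ for $1\le i\le n$ and $\deg f\le \nu_\rho$.

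First I would set up the situation: take $K=K'=T^r$ and $\rho'=\rho$, so $D$ is $\rho$-invariant and trivially $\rho'$-invariant, $0\in D=D'$, and all $\rho$-invariant holomorphic functions on $\mc^n$ are constant by hypothesis. All the auxiliary objects coincide: the weight spaces $V_{\mbk}$, the degree data $d_{\mbk}, D_{\mbk}$, the resonance sets $E_i$ and orders $\mu_i$, $\mu_\rho$, and correspondingly $\mu'_i=\mu_i$. Then the $i$-th quasi-resonance set $K_{i\rho'}=\{\mbk\in\mz^r:\ d_{\mbk}\le\mu'_i\}$ is exactly $K_i=\{\mbk:\ d_{\mbk}\le\mu_i\}$, the $i$-th quasi-resonance space $V_{i\rho'}$ is $V_i$, and $\nu_{i\rho'}=\nu_i$; taking the maximum over $i$ gives $\nu_{\rho\rho'}=\nu_\rho$.

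Next I would simply invoke Theorem~\ref{T:main} with this choice of data. The conclusion of that theorem is that $f$ is polynomial with each $f_i$ of degree at most the $i$-th quasi-resonance order of $\rho$ and $\rho'$, which here is $\nu_i$, and of total degree at most the quasi-resonance order, which here is $\nu_\rho$. This is precisely the assertion of Theorem~\ref{thm:deg est auto qu-reinhar}. For completeness one may add the remark that, combining with Corollary~\ref{cor:biho qreinh reso=1} in the case $\mu_\rho=1$, every origin-fixing automorphism of $D$ is linear.

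There is essentially no obstacle: the entire difficulty lies in the proof of Theorem~\ref{T:main} (the constancy of the Jacobian determinant in Proposition~\ref{prop:constant Jac} and the finite-dimensionality of the weight spaces in Proposition~\ref{prop:dim wt space finite}), which is already in place. The only point requiring a line of care is making sure the notational conventions — in particular the agreement $\mu'_i=\mu_i$ and the dropping of the $\rho'$ subscript — are applied consistently so that the bounds in Theorem~\ref{T:main} read off as $\nu_i$ and $\nu_\rho$ rather than their primed counterparts.
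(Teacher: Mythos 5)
Your proposal is correct and matches the paper exactly: the paper obtains Theorem~\ref{thm:deg est auto qu-reinhar} precisely by specializing Theorem~\ref{T:main} to the case $\rho=\rho'$ and $D=D'$, with the stated convention of dropping $\rho'$ from the subscripts so that the bounds become $\nu_i$ and $\nu_\rho$. Nothing further is needed.
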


\begin{cor}\label{cor:linear}
Let $D\subset\mc^n$ be a bounded quasi-Reinhardt domain with respect to $\rho$ with $0\in D$. If $\mu_\rho=1$ then any automorphism of $D$ fixing the origin is linear.
\end{cor}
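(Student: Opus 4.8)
The plan is to deduce Corollary \ref{cor:linear} directly from Theorem \ref{thm:deg est auto qu-reinhar}, which is itself the $\rho=\rho'$, $D=D'$ case of the already-proved Theorem \ref{T:main}. So the work is purely bookkeeping at the level of the resonance-order definitions. First I would observe that $\mu_\rho=\max_{1\le i\le n}\mu_i$, so the hypothesis $\mu_\rho=1$ is equivalent to $\mu_i=1$ for every $i$; recalling $\mu_i=D_{\mba_i}$ and that $z_i\in V_{\mba_i}$, this says each weight space $V_{\mba_i}$ contains only linear monomials (in fact only the single monomial $z_i$, up to scalars, since any $z^\alpha\in V_{\mba_i}$ with $|\alpha|=1$ must equal $z_i$ by comparing weights when the $\mba_j$ are distinct — though all we actually need is the degree bound).

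Next I would unwind the quasi-resonance order in the case $\rho'=\rho$. By definition $K_{i\rho}=\{\mbk:d_\mbk\le\mu_i\}=\{\mbk:d_\mbk\le 1\}$ under our hypothesis, and since $d_\mbk\ge 1$ for all $\mbk\ne 0$ while $d_0=0$, this forces $d_\mbk\le 1$ iff $\mbk\in K^l_\rho\cup\{0\}$, i.e. $K_{i\rho}=K^l_\rho\cup\{0\}$ (recall $0\in K^l_\rho$ is not the case since $d_0=0\ne 1$, but $V_0=\mc$ contributes only constants). Then $\nu_{i\rho}=\max\{D_\mbk:\mbk\in K_{i\rho}\}=\max(\{D_0\}\cup\{D_\mbk:d_\mbk=1\})=\max(0,\mu_\rho)=1$. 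Hence $\nu_\rho=\max_i\nu_{i\rho}=1$.

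Finally, applying Theorem \ref{thm:deg est auto qu-reinhar}, any automorphism $f$ of $D$ fixing the origin satisfies $\deg f_i\le\nu_{i\rho}=1$ for each $i$, so each $f_i$ is an affine-linear function; since $f(0)=0$, the constant terms vanish and $f$ is linear. This completes the proof. The main (and essentially only) obstacle is making sure the chain of definitions $\mu_\rho=1\Rightarrow d_\mbk\le 1\Leftrightarrow\mbk\in K_{i\rho}\Rightarrow\nu_{i\rho}=1$ is airtight, in particular that the maximum defining $\nu_{i\rho}$ is taken over a nonempty set containing only weights whose associated polynomials have degree at most one — which is immediate once one notes $D_\mbk=1$ whenever $d_\mbk=1$ and $\mu_\rho=1$, because $\mu_\rho=\max\{D_\mbk:\mbk\in K^l_\rho\}$ by the very first definition of the resonance order.
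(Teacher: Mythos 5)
Your proposal is correct and follows exactly the paper's (much terser) route: the paper simply notes that $\mu_\rho=1$ forces $\nu_\rho=1$ and then invokes Theorem \ref{T:main} in the automorphism case, which is precisely the chain of definitions you spell out. Your unwinding of $K_{i\rho}=K^l_\rho\cup\{0\}$ and $\nu_{i\rho}=\max(0,\mu_\rho)=1$ is an accurate filling-in of the paper's ``it is clear'' step.
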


There is no simple explicit formula for $\nu_i$ in terms of the weights $\mba_i$, $i=1,\cdots,n$, as given in \eqref{eqn:torus action}. On the other hand, if all $a_{ij}$ are nonnegative, a rough estimate of $\nu_{i\rho}$ can be given as follows.

\begin{prop}\label{thm:deg esti nonneg weight}
Let $D\subset\mc^n$ be a bounded quasi-Reinhardt domain with respect to $\rho$ with $0\in D$. Assume that all $a_{ij}\geq 0, (1\leq i\leq n, 1\leq j\leq r)$ and $|\mba_1|\leq\cdots\leq |\mba_n|$. Let $f=(f_1,\cdots, f_n)$ be an automorphism of $D$ with $f(0)=0$. Then we have
$$\textup{deg} f_i\leq \frac{|\mba_i||\mba_n|}{|\mba_1|^2}$$
for $1\leq i\leq n$, and
$$\textup{deg} f\leq \frac{|\mba_n|^2}{|\mba_1|^2}.$$
\end{prop}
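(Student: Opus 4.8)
The plan is to reduce everything to bounding the minimal degree $d_{\mbk}$ from below and the maximal degree $D_{\mbk}$ from above in terms of the quantity $|\mbk|_1 := k_1 + \cdots + k_r$, under the hypothesis $a_{ij} \ge 0$. First I would observe that since all weights are nonnegative, for any monomial $z^\alpha \in V_{\mbk}$ we have $\mbk = \sum_i \alpha_i \mba_i$, hence $|\mbk|_1 = \sum_{j=1}^r \sum_{i=1}^n \alpha_i a_{ij} = \sum_{i=1}^n \alpha_i |\mba_i|$. From $|\mba_1| \le |\mba_i| \le |\mba_n|$ this immediately gives the two-sided estimate $|\mba_1|\,|\alpha| \le |\mbk|_1 \le |\mba_n|\,|\alpha|$, i.e. $\frac{|\mbk|_1}{|\mba_n|} \le |\alpha| \le \frac{|\mbk|_1}{|\mba_1|}$ for every monomial in $V_{\mbk}$. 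Consequently $d_{\mbk} \ge \frac{|\mbk|_1}{|\mba_n|}$ and $D_{\mbk} \le \frac{|\mbk|_1}{|\mba_1|}$, so $D_{\mbk} \le \frac{|\mba_n|}{|\mba_1|}\, d_{\mbk}$.

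Next I would compute the $i$-th resonance order $\mu'_i = \mu_i = D_{\mba_i}$ (here $\rho' = \rho$): since $z_i \in V_{\mba_i}$ and $|\mba_i| = \sum_j a_{ij}$ is exactly $|\mbk|_1$ for $\mbk = \mba_i$, the bound above yields $\mu_i = D_{\mba_i} \le \frac{|\mba_i|}{|\mba_1|}$. Then by the definition of the $i$-th quasi-resonance set, $K_{i\rho} = \{\mbk : d_{\mbk} \le \mu_i\}$; for such $\mbk$ the lower bound on $d_{\mbk}$ forces $\frac{|\mbk|_1}{|\mba_n|} \le d_{\mbk} \le \mu_i \le \frac{|\mba_i|}{|\mba_1|}$, hence $|\mbk|_1 \le \frac{|\mba_i|\,|\mba_n|}{|\mba_1|}$. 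Applying the upper bound on $D_{\mbk}$ gives $\nu_{i\rho} = \max\{D_{\mbk} : \mbk \in K_{i\rho}\} \le \frac{|\mbk|_1}{|\mba_1|} \le \frac{|\mba_i|\,|\mba_n|}{|\mba_1|^2}$. By Theorem \ref{thm:deg est auto qu-reinhar}, $\deg f_i \le \nu_{i\rho} \le \frac{|\mba_i|\,|\mba_n|}{|\mba_1|^2}$, and taking $i = n$ (the largest bound) gives $\deg f \le \nu_\rho = \max_i \nu_{i\rho} \le \frac{|\mba_n|^2}{|\mba_1|^2}$.

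One technical point to verify carefully is that $|\mbk|_1$ is nonnegative and behaves well: since all $a_{ij} \ge 0$, every $\mbk$ arising as the weight of a nonzero element of $V_{\mbk}$ lies in $\mn^r$, so $|\mbk|_1 \ge 0$ and the inequalities above make sense without absolute-value subtleties; also $\mba_1 \ne 0$ because $\mathrm{rank}\, A = r \ge 1$ and, more to the point, $\cO(\mc^n)^\rho = \mc$ forbids a zero weight (otherwise $z_1$ would be invariant), so $|\mba_1| \ge 1$ and division is legitimate. The main (and only real) obstacle is simply making sure the chain of inequalities is set up in the right direction — lower-bounding $d_{\mbk}$ and upper-bounding $D_{\mbk}$ — and correctly invoking the already-established Theorem \ref{thm:deg est auto qu-reinhar} rather than re-deriving it; the rest is the elementary linear-algebra estimate $|\mba_1|\,|\alpha| \le |\mbk|_1 \le |\mba_n|\,|\alpha|$. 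No deeper input beyond the nonnegativity of the weights and the finiteness of $V_{\mbk}$ (Proposition \ref{prop:dim wt space finite}) is needed.
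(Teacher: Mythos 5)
Your proposal is correct and follows essentially the same route as the paper: the identity $|\mbk|=\sum_i\alpha_i|\mba_i|$ for $z^\alpha\in V_{\mbk}$ gives $d_{\mbk}\geq |\mbk|/|\mba_n|$ and $D_{\mbk}\leq |\mbk|/|\mba_1|$, whence $D_{\mbk}\leq \frac{|\mba_n|}{|\mba_1|}d_{\mbk}$, and chaining this through $\mu_i=D_{\mba_i}\leq |\mba_i|/|\mba_1|$ over the quasi-resonance set yields the stated bounds via Theorem \ref{thm:deg est auto qu-reinhar}. Your extra care about $|\mba_i|\geq 1$ and the nonnegativity of $|\mbk|_1$ matches the paper's opening remark that $\cO(\mc^n)^\rho=\mc$ forces $|\mba_i|\neq 0$.
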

\begin{proof}
Note that, since $\cO(\mc^n)^\rho=\mc$, we have $|\mba_i|\neq 0$ for all $i=1, \cdots, n$. Since all $V_{\mbk}$ are expanded by monomials, if $z^\alpha=z^{\alpha_1}_i\cdots z^{\alpha_n}_n\in V_{\mbk}$, then by \eqref{eqn:dim weight space}, we have
$$\alpha_1\cdot |\mba_1|+\cdots +\alpha_n\cdot|\mba_n|=|\mbk|.$$
Thus, we get
$$d_{\mbk}\geq \frac{|\mbk|}{|\mba_n|},\ \  D_{\mbk}\leq \frac{|\mbk|}{|\mba_1|},$$
and hence
$$D_{\mbk}\leq \frac{|\mba_n|}{|\mba_1|}d_{\mbk}.$$
Therefore, for $\mbk\in K_i$, we have
$$D_{\mbk}\leq \frac{|\mba_n|}{|\mba_1|}D_{\mba_i}\leq \frac{|\mba_n|}{|\mba_1|}\frac{|\mba_i|}{|\mba_1|}.$$
The proposition then follows from Theorem \ref{thm:deg est auto qu-reinhar}.
\end{proof}

\begin{rem}
Even in the case discussed in Proposition \ref{thm:deg esti nonneg weight}, Theorem \ref{thm:deg est auto qu-reinhar} is stronger than Proposition \ref{thm:deg esti nonneg weight} and hence Proposition \ref{thm:deg esti nonneg weight} and Corollary \ref{cor:biholo quasi-circular} are far from optimal. For example, consider the action of $S^1$ on $\mc^2$ given by $\rho(e^{i\theta})(z_1,z_2)=(e^{im_1\theta}z_1, e^{im_2\theta}z_2)$, with $m_2>m_1>1$ and $\text{gcd}(m_1,m_2)=1$. Then the bound on $\textup{deg}f$ given by Theorem \ref{thm:deg est auto qu-reinhar} is one and hence $f$ is a linear map. But the bound given by Proposition \ref{thm:deg esti nonneg weight} is very big if $m_2\gg m_1$.
\end{rem}

\end{document}